\documentclass[a4paper,12pt]{article}
\usepackage{amsmath}
\usepackage{amssymb}
\usepackage{amsthm}
\usepackage{mathtools}
\usepackage{mathabx}
\usepackage{enumitem}
\usepackage[utf8]{inputenc}
\usepackage[dvipsnames]{xcolor}
\usepackage{hyperref}

\hypersetup{
    colorlinks,
    linkcolor={blue!80!black},
    citecolor={blue!80!black},
    urlcolor={blue!80!black}
}

\newtheorem{theorem}{Theorem}
\newtheorem{definition}[theorem]{Definition}

\newtheorem{lemma}[theorem]{Lemma}

\newcommand{\coleq}{\coloneqq}
\newcommand{\ud}{\mathrm{d}}
\newcommand{\pd}{\partial}
\newcommand{\norm}[1]{\left\lVert#1\right\rVert}
\newcommand{\abso}[1]{\left|#1\right|}

\newcommand{\bN}{\mathbb{N}}
\newcommand{\bR}{\mathbb{R}}
\newcommand{\bE}{\mathbb{E}}
\newcommand{\bF}{\mathbb{F}}
\newcommand{\cD}{\mathcal{D}}

\newcommand{\e}{\varepsilon}
\DeclareMathOperator{\supp}{supp}

\title{On a nonlinear Peetre's theorem\\in full Colombeau algebras}
\author{E.A.~Nigsch\footnote{Faculty of Mathematics, University of Vienna, Oskar-Morgenstern-Platz 1, 1090 Vienna, Austria. e-mail: \href{mailto:eduard.nigsch@univie.ac.at}{eduard.nigsch@univie.ac.at}}}

\setlength{\parindent}{0pt}
\setlength{\parskip}{1ex plus 0.5ex minus 0.5ex}

\begin{document}

\maketitle

\begin{abstract}
We adapt a nonlinear version of Peetre's theorem on local operators in order to investigate representatives of nonlinear generalized functions occurring in the theory of full Colombeau algebras.
\end{abstract}

\textbf{MSC2010 classification:} 46F30

\textbf{Keywords:} nonlinear Peetre's theorem; local function; Colombeau algebra

\section{Preliminaries}

Algebras of of nonlinear generalized functions in the sense of J.F.~Colombeau \cite{Biagioni,ColNew,ColElem,GKOS,MOBook} provide a way to define a meaningful multiplication of arbitrary distributions while at the same time products of smooth functions and the partial derivatives of distribution theory are preserved. This is the best one can obtain in light of L.~Schartz' impossibility result \cite{Schwartz}.

A certain variant of these algebras, namely those which are termed \emph{full} Colombeau algebras, have been gaining more and more importance recently through their role in the development of a coordinate-invariant formulation of nonlinear generalized function algebras suitable for singular differential geometry and nonlinear problems in a geometrical context. We recall that in general, Colombeau algebras are given as quotients of certain basic spaces containing the representatives of generalized functions. In successive steps, these basic spaces have been modified and enlarged in order for the resulting algebras to accomodate certain desired properties \cite{found, global, bigone, papernew}. At one point in this development, the sheaf property could only be obtained in the quotient by imposing so-called \emph{locality conditions} on the elements of the basic space. 

The object of this article is to study representatives of nonlinear generalized functions on an open subset $\Omega \subseteq \bR^n$ which are given by smooth mappings
\[ R \colon C^\infty(\Omega, \cD(\Omega)) \to C^\infty(\Omega) \]
which satisfy the most general of these locality conditions, i.e., which are \emph{local} (Definition \ref{def_local}). Adapting arguments of J.~Slov\'ak from \cite{zbMATH04036782} we obtain a characterization of locality in simpler terms, i.e., $R(\vec\varphi)(x)$ does not depend on the germ of $\vec\varphi$ at $x$ but only on its jet of infinite order at $x$ (Theorem \ref{thm_peetre}). Furthermore, we examine in which sense such mappings $R$ have locally finite order (Theorem \ref{thm_5}). While any distribution is of finite order locally, no comparable statement exists for Colombeau algebras so far; our results are a first step in this direction.

Let use introduce some notation. Throughout this article we will work on open subsets $\Omega_1 \subseteq \bR^n$ and $\Omega_2 \subseteq \bR^{n'}$ with $n,n' \in \bN$ fixed. We employ the usual multiindex notation $\pd^\alpha$, $\alpha!$, $\abso{\alpha}$ etc.\ with differentiation indices $\alpha \in \bN_0^n$. Given an open subset $\Omega \subseteq \bR^n$ and a locally convex space $\bE$, the space $C^\infty(\Omega, \bE)$ is endowed with its standard topology, which is that of uniform convergence on compact sets in all derivatives separately. For a function $f$ we denote by $j^r f(x)$ the $r$-jet of $f$ at $x$, i.e., the family $(\pd^\alpha f(x))_{\abso{\alpha} \le r}$, where also $r = \infty$ is allowed. The interior of a set $B$ is denoted by $B^\circ$. Note that for smooth functions $f(x,y)$ of two variables we will also write $f(x)(y)$, justified by the exponential law \cite[3.12, p.~30]{KM}.

The formulation of Theorem \ref{thm_5} requires a notion of smoothness for mappings between arbitrary locally convex spaces. The setting we use for this is that of convenient calculus \cite{KM}, i.e., a mapping $f \colon \bE \to \bF$ between two locally convex spaces is said to be smooth in this sense if it maps each smoothly parametrized curve into $\bE$ to a smoothly parametrized curve into $\bF$, i.e., for all $c \in C^\infty(\bR, \bE)$ we have $f \circ c \in C^\infty(\bR, \bF)$.

For convenience we cite the extension theorem of Whitney \cite[1.5.6, p.~31]{zbMATH03934056} which will be heavily used below.
\begin{theorem}[Whitney]\label{whitney}
 Let $\Omega$ be an open subset of $\bR^n$ and $X$ a closed subset of $\Omega$. Given a continuous function $f^\alpha$ on $X$ for each $\alpha \in \bN_0^n$, there exists a function $f \in C^\infty(\Omega)$ with $\pd^\alpha f|_X = f^\alpha$ for all $\alpha \in \bN_0^n$ if and only if for all integers $m \ge 0$ and all compact subsets $K \subseteq X$ we have
\begin{equation}\label{whitcond}
f^\alpha(y) = \sum_{\abso{\beta} \le m} \frac{1}{\beta!} f^{\alpha + \beta} (x) (y-x)^\beta + o ( \norm{y-x}^m)
\end{equation}
uniformly for $x,y \in K$ as $\norm{y-x} \to 0$.
\end{theorem}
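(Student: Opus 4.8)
The plan is to dispose of the ``only if'' implication by a direct appeal to Taylor's theorem and to establish the ``if'' implication by the classical Whitney cube construction. For the easy direction I would fix $m\ge 0$, a compact $K\subseteq X$, and a compact convex neighbourhood $L$ of $K$ inside $\Omega$; Taylor's formula with integral remainder applied to $\pd^\alpha f$ along segments in $L$ then shows that the difference between the two sides of \eqref{whitcond} is bounded by $C_{n,m}\bigl(\sup_{\abso{\beta'}=m+1}\sup_L\abso{\pd^{\alpha+\beta'}f}\bigr)\norm{y-x}^{m+1}$, which is $o(\norm{y-x}^m)$ uniformly for $x,y\in K$ since the relevant derivatives are bounded on $L$.

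For the converse I would proceed as follows. First, decompose the open set $U\coleq\Omega\setminus X$ into a locally finite family of closed dyadic cubes $(Q_j)_j$ with pairwise disjoint interiors satisfying $\operatorname{diam}Q_j\le\operatorname{dist}(Q_j,X)\le 4\operatorname{diam}Q_j$ for all cubes lying within distance $1$ of $X$ (the remaining cubes sit near $\pd\Omega$ and are irrelevant below); recall that then two cubes whose slight enlargements $Q_j^*$ meet have comparable diameters. Second, pick a subordinate smooth partition of unity $(\varphi_j)_j$ with $\supp\varphi_j\subseteq (Q_j^*)^\circ$, $\sum_j\varphi_j\equiv 1$ on $U$, and $\abso{\pd^\gamma\varphi_j}\le A_\gamma(\operatorname{diam}Q_j)^{-\abso\gamma}$. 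Third, for each $j$ fix $a_j\in X$ with $\operatorname{dist}(a_j,Q_j)=\operatorname{dist}(Q_j,X)$ and an integer $d_j\ge 0$ with $d_j\to\infty$ as $\operatorname{diam}Q_j\to 0$, the precise rate of growth being calibrated only at the end, and set $P_j(x)\coleq\sum_{\abso\beta\le d_j}\tfrac{1}{\beta!}f^\beta(a_j)(x-a_j)^\beta$. Then define $f\coleq f^0$ on $X$ and $f\coleq\sum_j\varphi_j P_j$ on $U$; on $U$ this is a locally finite sum of smooth functions, hence smooth.

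The remaining, and principal, point is the behaviour of $f$ across $X$. By the elementary fact that a function which is continuous on $\Omega$, smooth on $U$, and whose derivatives on $U$ extend continuously to $\Omega$ so as to agree on $X$ with functions $f^\alpha$ obeying \eqref{whitcond} (so that first-order increments integrate correctly) is automatically $C^\infty(\Omega)$ with $\pd^\alpha f|_X=f^\alpha$, it suffices to prove $\pd^\alpha f(x)\to f^\alpha(a)$ as $x\to a$ for every $a\in X$ and every $\alpha$; for $x\in X$ this is just \eqref{whitcond} with $m=0$. For $x\in U$ near $a$ only boundedly many $\varphi_j$ are nonzero at $x$, and for those indices the cubes have mutually comparable diameters $\rho_j\lesssim\norm{x-a}$ with $\norm{x-a_j}\lesssim\rho_j$ and $d_j\ge d(x)$ for some $d(x)\to\infty$. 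Letting $b=b(x)\in X$ realise $\operatorname{dist}(x,X)$ and using the Leibniz rule together with $\sum_j\pd^\delta\varphi_j\equiv 0$ on $U$ for $\delta\ne 0$, I would subtract the $j$-independent comparison polynomial $T_x(y)\coleq\sum_{\abso\sigma\le d(x)}\tfrac1{\sigma!}f^\sigma(b)(y-b)^\sigma$, obtaining
\[
\pd^\alpha f(x)-\pd^\alpha T_x(x)=\sum_j\sum_{\gamma\le\alpha}\binom{\alpha}{\gamma}\pd^{\alpha-\gamma}\varphi_j(x)\,\pd^\gamma(P_j-T_x)(x).
\]
Here the key estimate, obtained by expanding $(x-a_j)^\beta=((x-b)+(b-a_j))^\beta$ and inserting the Whitney relation $f^{\gamma+\beta}(a_j)=\sum_{\abso\eta\le p}\tfrac1{\eta!}f^{\gamma+\beta+\eta}(b)(a_j-b)^\eta+o(\norm{a_j-b}^p)$, is that $\pd^\gamma P_j(x)$ equals the degree-$(d_j-\abso\gamma)$ Taylor polynomial of $f^\gamma$ based at $b$, evaluated at $x$, up to an error that is $o(\rho_j^{\,d_j-\abso\gamma})$ uniformly as $x\to a$; since $\pd^\gamma T_x(x)$ is exactly the degree-$(d(x)-\abso\gamma)$ such polynomial, $\pd^\gamma(P_j-T_x)(x)$ is controlled by $C_n M_{d_j}\rho_j^{\,d(x)-\abso\gamma+1}$ plus that $o$-error, where $M_d\coleq\sup\{\abso{f^\alpha(y)}:\abso\alpha\le d,\ y\in K\}$ and $K\subseteq X$ is a fixed compact neighbourhood of $a$. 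Multiplying by $\abso{\pd^{\alpha-\gamma}\varphi_j(x)}\le A_{\alpha-\gamma}\rho_j^{\,\abso\gamma-\abso\alpha}$ and summing over the boundedly many contributing $j$ leaves $O\bigl(M_{\overline d}\rho(x)^{\,d(x)-\abso\alpha+1}\bigr)+o(1)$ with $\rho(x)\to0$ and $\overline d,d(x)\to\infty$; this tends to $0$ as soon as the degrees grow slowly enough, e.g. if for each dyadic scale $2^{-\ell}$ one fixes a degree $D(\ell)\nearrow\infty$ maximal with $M_{D(\ell)}\le 2^{\ell/2}$, which makes $M_{D(\ell+c_n)}2^{-\ell}\to0$. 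Since also $\pd^\alpha T_x(x)\to f^\alpha(a)$ because $\norm{x-b}\to0$, this gives $\pd^\alpha f(x)\to f^\alpha(a)$, and the proof is complete.

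I expect the main obstacle to be precisely this last calibration. For a single, fixed differentiability order one may simply take $d_j\equiv m$, but that only yields a $C^m$ extension; for $C^\infty$ the polynomial degrees must increase as the Whitney cubes shrink, yet slowly enough to stay dominated by the \emph{a priori} uncontrolled growth of $M_d=\sup_K\abso{f^\alpha}_{\abso\alpha\le d}$ — and this is the one place where \eqref{whitcond} for all $m$, rather than for a single $m$, genuinely enters. The supporting estimate that Taylor polynomials of the jet based at nearby points of $X$ agree to high order is routine but somewhat lengthy multi-index bookkeeping.
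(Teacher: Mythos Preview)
The paper does not prove this theorem at all; it is quoted verbatim as a preliminary result from Narasimhan's book \cite{zbMATH03934056} and used as a black box in the proofs of Theorem~\ref{thm_peetre}, Lemma~\ref{thelemma}, and Theorem~\ref{thm_5}. So there is no ``paper's own proof'' to compare against.

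Your sketch is the classical Whitney cube construction and is correct in outline. The ``only if'' direction via Taylor's formula with integral remainder is immediate. For the ``if'' direction, the decomposition of $\Omega\setminus X$ into Whitney cubes, the subordinate partition of unity with scaled derivative bounds, and the gluing of Taylor polynomials $P_j$ based at nearest points $a_j\in X$ are exactly the standard ingredients. Your identification of the degree calibration $d_j\to\infty$ as the essential difficulty distinguishing the $C^\infty$ case from the $C^m$ case is accurate, and the proposed choice (letting $D(\ell)$ be maximal with $M_{D(\ell)}\le 2^{\ell/2}$) works. The bookkeeping in the comparison $\pd^\gamma(P_j-T_x)(x)$ is, as you say, routine but tedious; the argument goes through.
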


\section{Main Results}

We first recall the definition of locality for elements of the basic space
\[ C^\infty(C^\infty(\Omega, \cD(\Omega)), C^\infty(\Omega)) \]
given in \cite{papernew}. While only the case $\Omega_1 = \Omega_2$ was considered there, we use a slightly more general formulation which will be needed below.

\begin{definition}\label{def_local}A mapping $R \colon C^\infty(\Omega_1, \cD(\Omega_2)) \to C^\infty(\Omega_1)$ is called \emph{local} if for all $x \in \Omega_1$ and all $\vec\varphi, \vec\psi \in C^\infty(\Omega_1, \cD(\Omega_2))$ the following implication holds:
\[ \vec\varphi|_U = \vec\psi|_U\textrm{ for some open neighborhood $U$ of }x \Longrightarrow R(\vec\varphi)(x) = R(\vec\psi)(x). \]
\end{definition}

Our first result is the following.

\begin{theorem}\label{thm_peetre} A mapping $R \colon C^\infty( \Omega_1, \cD(\Omega_2)) \to C^\infty(\Omega_1)$ is local if and only if for every $\vec\varphi \in C^\infty(\Omega_1, \cD(\Omega_2))$ and every point $x \in \Omega_1$, $R(\vec\varphi)(x)$ depends on the $\infty$-jet $j^\infty(\vec\varphi)(x)$ only, i.e., if for all $x \in \Omega_1$ and $\vec\varphi, \vec\psi \in C^\infty(\Omega_1, \cD(\Omega_2))$ the equality $j^\infty \vec\varphi (x) = j^\infty \vec\psi (x)$ implies $R(\vec\varphi)(x) = R(\vec\psi)(x)$.
\end{theorem}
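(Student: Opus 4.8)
The direction "$\infty$-jet dependence $\Rightarrow$ local" is immediate: if $\vec\varphi|_U = \vec\psi|_U$ on a neighborhood $U$ of $x$, then all partial derivatives agree at $x$, so $j^\infty\vec\varphi(x) = j^\infty\vec\psi(x)$, hence $R(\vec\varphi)(x) = R(\vec\psi)(x)$. The substance is the converse: assuming $R$ is local, I must show $R(\vec\varphi)(x)$ depends only on $j^\infty\vec\varphi(x)$.

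So fix $x \in \Omega_1$ and $\vec\varphi, \vec\psi \in C^\infty(\Omega_1, \cD(\Omega_2))$ with $j^\infty\vec\varphi(x) = j^\infty\vec\psi(x)$; I want $R(\vec\varphi)(x) = R(\vec\psi)(x)$. The natural strategy, following Slovák's treatment of the nonlinear Peetre theorem, is to construct an auxiliary smooth map $\vec\chi \in C^\infty(\Omega_1, \cD(\Omega_2))$ that interpolates between $\vec\varphi$ and $\vec\psi$ in such a way that locality can be applied: concretely, one builds a smooth family parametrized so that near $x$ it coincides with $\vec\varphi$ on one "side" and with $\vec\psi$ on another, then uses that $R$ respects restriction to open sets. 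Because the jets agree to infinite order at $x$, the difference $\vec\varphi - \vec\psi$ is flat at $x$ (all derivatives vanish), and this flatness is exactly what is needed to glue $\vec\varphi$ and $\vec\psi$ together through a smooth deformation without destroying smoothness as a $\cD(\Omega_2)$-valued map. The key technical input here is Whitney's extension theorem (Theorem \ref{whitney}): the flatness at $x$ furnishes the uniform Taylor estimates \eqref{whitcond} needed to realize the interpolating map as a genuine element of $C^\infty(\Omega_1, \cD(\Omega_2))$ — one typically works with a spiral or sequence of shrinking neighborhoods accumulating at $x$, on which one alternately places (reparametrized copies of) $\vec\varphi$ and $\vec\psi$, and checks via Whitney that the resulting patchwork is smooth including at the limit point $x$.

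More precisely, I expect the construction to go as follows. Choose a smooth curve or a sequence $x_k \to x$ in $\Omega_1$ together with disjoint open neighborhoods, and a smooth "switching" function. Define $\vec\chi$ near each $x_k$ to equal a translate/rescale of $\vec\varphi$ for even $k$ and of $\vec\psi$ for odd $k$, interpolated smoothly in between, and extended to be flat-matching at $x$. Applying locality of $R$ at each $x_k$ gives $R(\vec\chi)(x_k) = R(\vec\varphi)(x_k)$ or $R(\vec\psi)(x_k)$ accordingly (after undoing the reparametrization — here one needs $R$ or at least its relevant values to be compatible with the diffeomorphisms used, or one sets the construction up so that the reparametrization is trivial near each $x_k$). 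Since $R(\vec\chi) \in C^\infty(\Omega_1)$ is continuous at $x$, the two subsequences $R(\vec\chi)(x_{2k})$ and $R(\vec\chi)(x_{2k+1})$ have the same limit $R(\vec\chi)(x)$; combined with continuity of $R(\vec\varphi)$ and $R(\vec\psi)$ at $x$, this forces $R(\vec\varphi)(x) = R(\vec\psi)(x)$.

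The main obstacle I anticipate is verifying that the interpolating map $\vec\chi$ genuinely lies in $C^\infty(\Omega_1, \cD(\Omega_2))$ — i.e., that it is smooth as a map into the $LF$-space $\cD(\Omega_2)$, uniformly near $x$, and that the supports stay in a fixed compact set locally. This is where the flatness of $\vec\varphi - \vec\psi$ at $x$ and the precise form of Whitney's condition \eqref{whitcond} must be combined carefully: one needs the Taylor remainders of the patched map, measured in the seminorms of $C^\infty(\Omega_1, \cD(\Omega_2))$, to decay fast enough as the parameter approaches $x$ to beat the blow-up coming from the rescalings. A secondary subtlety is arranging the reparametrizations so that $R$'s locality can be invoked cleanly — this is a bookkeeping issue but easy to get wrong. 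Everything else (the easy direction, the continuity argument at the end) is routine.
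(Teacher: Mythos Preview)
Your plan is correct and matches the paper's proof essentially step for step: a sequence $x_k \to x$ with shrinking disjoint neighborhoods $U_k$, an interpolant equal to $\vec\varphi$ on the even-indexed $U_k$ and to $\vec\psi$ on the odd-indexed ones, Whitney's theorem (applied on the product with a compact $L \subseteq \Omega_2$, then multiplied by a cutoff in the $\Omega_2$-variable to land in $\cD(\Omega_2)$) to produce the smooth interpolant, and the continuity-of-$R(\vec\chi)$ argument to finish. One clarification that dissolves your ``secondary subtlety'': no translates, rescales, or reparametrizations are needed---the interpolant is set equal to $\vec\varphi$ and $\vec\psi$ \emph{verbatim} on the respective $U_k$, so locality applies directly and there is nothing to undo.
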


The proof imitates that of \cite[Theorem 1, p.~274]{zbMATH04036782} but is adapted in order to incorporate the additional variable $y$ of the smoothing kernels $\vec\varphi(x)(y)$.

\begin{proof}
Suppose we are given $\vec \varphi, \vec\psi \in C^\infty(\Omega_1, \cD(\Omega_2))$ such that $(\pd_x^\alpha \vec\varphi)(x) = (\pd_x^\alpha \vec\psi)(x)$ for some fixed $x \in \Omega_1$ and all $\alpha \in \bN_0^n$. Choose an open neighborhood $W$ of $x$ which is convex and relatively compact in $\Omega_1$, as well as compact sets $K,L \subseteq \Omega_2$ with $K \subseteq L^\circ$ such that $\supp \vec\varphi(a) \cup \supp \vec\psi(a) \subseteq K$ for all $a \in W$.

Next, we construct a sequence $(x_k)_{k \in \bN}$ in $W$ and an open neighborhood $U_k$ of each $x_k$ with $\overline{U}_k \subseteq W$ such that for all $k$ the following conditions hold:
\begin{gather}
\norm{a-x} < \norm{b-x}/2 \quad \forall a \in \overline{U}_{k+1}, b \in \overline{U}_k \label{cond1} \\
\abso{(\pd_x^\alpha \pd_y^\beta \vec\varphi)(a,\xi) - (\pd_x^\alpha \pd_y^\beta \vec\psi)(a,\xi)} \le \frac{1}{k} \norm{a-x}^m \label{cond2} \\
\nonumber\forall a \in \overline{U}_k, \xi \in L, \abso{\alpha} + \abso{\beta} + m \le k.
\end{gather}
It suffices to show that \eqref{cond2} holds for any fixed $\alpha, \beta \in \bN_0^n$ and $m \in \bN$ uniformly for all $\xi \in L$ if $\norm{a-x}$ is small enough. By Taylor's theorem we have for any $f=f(x,y) \in C^\infty(\Omega_1 \times \Omega_2)$, $m \in \bN_0$, $a \in W$ and $\xi \in L$:
\begin{align*}
 f(a,\xi) &= \sum_{\abso{\gamma} < m} \frac{ (\pd_x^\gamma f)(x,\xi)}{\gamma!} (a-x)^\gamma \\
&\quad + m \sum_{\abso{\gamma} = m} \frac{ (a-x)^\gamma }{\gamma!} \int_0^1 (1-t)^{m-1} (\pd_x^\gamma f) ( x + t(a-x), \xi)  \,\ud t.
\end{align*}
Replacing $f$ by $\pd^\alpha_x \pd^\beta_y \vec\varphi - \pd^\alpha_x \pd^\beta_y \vec\psi$ we see that
\begin{gather*}
  (\pd^\alpha_x \pd^\beta_y \vec\varphi - \pd^\alpha_x \pd^\beta_y \vec\psi)(a,\xi) = \\
 m \sum_{\abso{\gamma} = m} \frac{ (a-x)^\gamma }{\gamma!} \int_0^1 (1-t)^{m-1} ( \pd^{\alpha+\gamma}_x \pd^\beta_y \vec\varphi - \pd^{\alpha+\gamma}_x \pd^\beta_y \vec\psi ) ( x + t(a-x), \xi)  \,\ud t \\
= o(\norm{a-x}^m) \qquad \textrm{as }\norm{a-x} \to 0
\end{gather*}
uniformly for $(a,\xi) \in W \times L$. In fact, $(\pd^{\alpha+\gamma}_x \pd^\beta_y \vec\varphi - \pd^{\alpha+\gamma}_x \pd^\beta_y \vec\psi)(a, \xi)$ vanishes for $a=x$ by assumption and is uniformly continuous on the compact set $\overline{W} \times L$, hence the integrand converges to zero uniformly for $\xi \in L$ as $a$ and hence $x + t(a-x)$ approaches $x$.

Note that \eqref{cond1} implies
\begin{equation}\label{cond3}
\norm{a-x} < 2 \norm{a-b}\quad \forall a \in \overline{U}_k, b \in \overline{U}_j, k \ne j.
\end{equation}
In fact, for $k>j$ we have $2 \norm{a-x} < \norm{b-x} \le \norm{a-b} + \norm{a-x}$ and for $k<j$ we have $\norm{a-x} \le \norm{a-b} + \norm{b-x} < \norm{a-b} + \norm{a-x}/2$. Moreover, $x_k \to x$ for $k \to \infty$.

With $A \coleq \{x\} \cup \bigcup_{k} \overline{U}_k$, which is a compact subset of $W$, we define a family of continuous functions $h^{\alpha,\beta}$ on $A \times L$ with $\alpha,\beta \in \bN_0^n$ by
\begin{equation}\label{blubber}
h^{\alpha,\beta}(a,\xi) \coleq \left\{
\begin{aligned}
 (\pd_x^\alpha \pd_y^\beta \vec\varphi)(a,\xi)\qquad &a = x \textrm{ or }a \in \overline{U}_{2k} \textrm{ for some }k, \\
 (\pd_x^\alpha \pd_y^\beta \vec\psi)(a,\xi)\qquad &a \in \overline{U}_{2k+1} \textrm{ for some }k.
\end{aligned}
\right.
\end{equation}
In order to apply Whitney's theorem to this family we have to verify that
\begin{equation}\label{verify}
h^{\alpha,\beta}(b,\eta) = \sum_{\abso{(\gamma,\lambda)} \le m } \frac{ h^{\alpha + \gamma, \beta + \lambda} (a,\xi) } { (\gamma,\lambda)! } (b-a)^\gamma (\eta-\xi)^\lambda + o ( \norm{ (b-a,\eta-\xi) }^m )
\end{equation}
uniformly for $(b,\eta)$ and $(a,\xi)$ in $A \times L$ as $\norm{ (b-a,\eta-\xi) } \to 0$. This follows easily from Taylor's theorem, \eqref{cond2} and \eqref{cond3}.

Consequently, there is a function $\tilde h \in C^\infty(\bR^n \times \bR^{n'})$ whose derivatives on $A \times L$ are given by $\pd_x^\alpha\pd_y^\beta \tilde h = h^{\alpha,\beta}$. Choosing $\rho \in \cD(\Omega_2)$ such that $\rho \equiv 1$ in an open neighborhood of $K$ and $\supp \rho \subseteq L$, set $h(a)(\xi) \coleq \tilde h(a,\xi) \cdot \rho(\xi)$ for $a \in \Omega_1$ and $\xi \in \Omega_2$. Then $h \in C^\infty(\Omega_1, \cD(\Omega_2))$ and 
\begin{gather*}
h|_{U_{2k}} = \vec\varphi|_{U_{2k}},\quad h|_{U_{2k+1}} = \vec\psi|_{U_{2k+1}}\qquad \forall k \in \bN, \\
(\pd_x^\alpha h)(x) = (\pd_x^\alpha\vec\varphi)(x) = (\pd_x^\alpha\vec\psi)(x)\qquad \forall \alpha \in \bN_0^n.
\end{gather*}
The claim of the theorem then follows by
\begin{align*}
R(\vec\varphi)(x) &= \lim_{k \to \infty} R(\vec\varphi)(x_{2k}) = \lim_{k \to \infty} R(h)(x_{2k}) \\
&= \lim_{k \to \infty} R(h)(x_{2k+1}) = \lim_{k \to \infty} R(\vec\psi)(x_{2k+1}) = R(\vec\psi)(x).\qedhere
\end{align*}
\end{proof}

In order to show that $R(\vec\varphi)(x)$ locally depends only on \emph{finitely many} derivatives of $\vec\varphi(x)$ in a certain sense, we will employ the following lemma, paralleling \cite[Lemma 1, p.~276]{zbMATH04036782}.

\begin{lemma}\label{thelemma}
Let $R \colon C^\infty ( \Omega_1, \cD(\Omega_2) ) \to C^\infty(\Omega_1)$ be local and suppose we are given $f \in C^\infty( \Omega_1, \cD(\Omega_2))$, $x_0 \in \Omega_1$ and $K \subseteq \Omega_2$ compact with $\supp f (x_0) \subseteq K$. 

Define $\e \colon \bR^n \to \bR$ by
\[
 \e(x) \coleq \left\{ \begin{aligned}
& \exp ( -1 / \norm{x - x_0}) \qquad & x &\ne x_0, \\
&0 & \qquad x &= x_0.
 \end{aligned}
\right.
 \]

Then there exist a neighborhood $V$ of $x_0$ in $\Omega_1$ and $r \in \bN$ such that for any $x \in V$ and $g_1, g_2 \in C^\infty(\Omega_1, \cD(\Omega_2))$ satisfying
\begin{enumerate}[label=(\roman*)]
 \item $\supp g_i (y) \subseteq K$ for $y$ in a neighborhood of $x$ and $i=1,2$,
 \item $\sup_{\xi \in \Omega_2} \abso{ \pd_x^\alpha \pd_y^\beta ( g_i-f)(x)(\xi) } \le \e(x)$ for $i=1,2$ and $0 \le \abso{\alpha} + \abso{\beta} \le r$
\end{enumerate}
we have the implication $j^r g_1 (x) = j^r g_2(x) \Longrightarrow (Rg_1)(x) = (Rg_2)(x)$.
\end{lemma}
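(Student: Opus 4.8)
The plan is to follow the proof of \cite[Lemma 1, p.~276]{zbMATH04036782}, reducing the statement to Theorem \ref{thm_peetre} by a Whitney-type gluing argument analogous to the one used above, and arguing by contradiction. If the assertion fails, then for every neighborhood $V$ of $x_0$ and every $r \in \bN$ there are a point $x \in V$ and maps $g_1, g_2 \in C^\infty(\Omega_1, \cD(\Omega_2))$ satisfying conditions (i) and (ii) for that $r$ with $j^r g_1(x) = j^r g_2(x)$ but $R(g_1)(x) \ne R(g_2)(x)$. Applying this along a sequence of neighborhoods shrinking to $x_0$ and with $r = k$ at the $k$-th step, I obtain $x_k \to x_0$ in $\Omega_1$ and maps $g_1^k, g_2^k$ with $\supp g_i^k(y) \subseteq K$ for $y$ near $x_k$, with $\sup_{\xi \in \Omega_2} \abso{\pd_x^\alpha \pd_y^\beta (g_i^k-f)(x_k)(\xi)} \le \e(x_k)$ for $\abso\alpha + \abso\beta \le k$, with $j^k g_1^k(x_k) = j^k g_2^k(x_k)$, but with $R(g_1^k)(x_k) \ne R(g_2^k)(x_k)$. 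Enlarging $K$ if necessary (which only weakens conditions (i), so that the statement for the larger $K$ implies the one for the given $K$) I may assume $\supp f(a) \subseteq K$ for $a$ in a neighborhood of $x_0$. Passing to a subsequence I fix a convex, relatively compact neighborhood $W$ of $x_0$ in $\Omega_1$ and choose open neighborhoods $U_k \ni x_k$ with $x_0 \notin \overline{U}_k$, pairwise disjoint closures contained in $W$, satisfying the separation condition \eqref{cond1} (hence also \eqref{cond3}); shrinking the $U_k$ and using continuity of the finitely many relevant derivatives, I may additionally assume $\sup_{\xi \in \Omega_2} \abso{\pd_x^\alpha\pd_y^\beta(g_i^k-f)(a)(\xi)} \le 2\e(x_k)$ and $\supp g_i^k(a) \subseteq K$ for all $a$ in a neighborhood of $\overline{U}_k$ and $\abso\alpha+\abso\beta \le k$, as well as $\norm{a-x_0} \ge \tfrac12\norm{x_k-x_0}$ on $\overline{U}_k$.

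Next I glue exactly as in the proof of Theorem \ref{thm_peetre}. Choosing a compact $L \subseteq \Omega_2$ with $K \subseteq L^\circ$, I set $A \coleq \{x_0\} \cup \bigcup_k \overline{U}_k$ (a compact subset of $W$) and apply Whitney's theorem on $A \times L$, prescribing the derivatives of $g_i^k$ on $\overline{U}_k \times L$ and those of $f$ on $\{x_0\} \times L$. The compatibility condition \eqref{whitcond} is trivial inside each $\overline{U}_k \times L$; for pairs of points lying in two different clusters, or in some $\overline{U}_k$ and at $x_0$, only pairs with $k \to \infty$ can become arbitrarily close, and for those the condition follows from Taylor's theorem together with the bounds $2\e(x_k)$ and \eqref{cond3}, because $\e$ vanishes to infinite order at $x_0$, so that $2\e(x_k) \le 2\exp(-1/(4\norm{a-b}))= o(\norm{a-b}^m)$ for every $m$ whenever $a \in \overline{U}_k$, $b \in \overline{U}_j \cup \{x_0\}$, $j \ne k$. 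Multiplying the resulting $\tilde h_i \in C^\infty(\bR^n \times \bR^{n'})$ by a cutoff $\rho \in \cD(\Omega_2)$ with $\rho \equiv 1$ near $K$ and $\supp \rho \subseteq L$, and restricting to a neighborhood $V$ of $x_0$ if necessary, I obtain $\bar g_i \coleq (a \mapsto (\xi \mapsto \tilde h_i(a,\xi)\rho(\xi))) \in C^\infty(\Omega_1, \cD(\Omega_2))$ with $\bar g_i|_{U_k} = g_i^k|_{U_k}$ for all $k$ and $j^\infty \bar g_i(x_0) = j^\infty f(x_0)$.

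It remains to extract the contradiction, and this is where I expect the main obstacle to lie. By locality, $R(\bar g_i)(x_k) = R(g_i^k)(x_k)$ for every $k$, so $R(\bar g_1)(x_k) \ne R(\bar g_2)(x_k)$; since $j^\infty \bar g_1(x_0) = j^\infty f(x_0) = j^\infty \bar g_2(x_0)$, Theorem \ref{thm_peetre} forces $R(\bar g_1)(x_0) = R(\bar g_2)(x_0)$; and $R(\bar g_1), R(\bar g_2) \in C^\infty(\Omega_1)$ are in particular continuous. These three facts are not yet in conflict on their own — a continuous function may vanish at $x_0$ while being nonzero along $x_k \to x_0$ — so, just as in Slov\'ak's proof, the construction must be arranged with more care: one interleaves the families $(g_1^k)_k$ and $(g_2^k)_k$ over the clusters $U_k$ (and exploits that $g_1^k$ and $g_2^k$ share their $k$-jet at $x_k$, an ever sharper approximation as $k \to \infty$), so that the values pinned down at the $x_k$ by locality become incompatible with the single value pinned down at $x_0$ by Theorem \ref{thm_peetre} together with smoothness of the images. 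The remaining ingredients — the subsequence extraction, the separation estimates \eqref{cond1}, \eqref{cond3}, and the verification of \eqref{whitcond} — are routine and mirror the corresponding steps in the proof of Theorem \ref{thm_peetre}.
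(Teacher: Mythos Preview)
Your proposal correctly identifies its own gap and then does not close it. You build two glued functions $\bar g_1,\bar g_2$ with $R(\bar g_1)(x_k)\ne R(\bar g_2)(x_k)$ and $R(\bar g_1)(x_0)=R(\bar g_2)(x_0)$, note that this is not a contradiction, and then gesture at ``interleaving'' without carrying it out. But interleaving $g_1^k$ and $g_2^k$ over the clusters $U_k$ produces a single $h$ with $R(h)(x_k)$ equal to $R(g_1^k)(x_k)$ for even $k$ and $R(g_2^k)(x_k)$ for odd $k$; there is no reason these values, which vary with $k$, should fail to converge to a common limit. The shared $k$-jets of $g_1^k$ and $g_2^k$ at $x_k$ are irrelevant here: they are jets of \emph{different} functions at \emph{different} points, and nothing forces $R(g_1^k)(x_k)$ and $R(g_2^{k+1})(x_{k+1})$ to be far apart. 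So the sketch, as it stands, does not lead to a contradiction.

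The paper's argument is genuinely different and supplies exactly the missing quantitative ingredient. Instead of two glued functions, one chooses auxiliary points $y_k$ near $x_k$ with the \emph{lower} bound
\[
\abso{(Rg_k)(x_k) - (Rf_k)(y_k)} \ge k\,\norm{x_k-y_k}^{1/k},
\]
which is possible because $(Rf_k)(x_k)\ne(Rg_k)(x_k)$ and $Rf_k$ is continuous. Then a \emph{single} $h$ is built via Whitney, matching the $\infty$-jet of $g_k$ at $x_k$, of $f_k$ at $y_k$, and of $f$ at $x_0$. Theorem~\ref{thm_peetre} gives $R(h)(x_k)=(Rg_k)(x_k)$ and $R(h)(y_k)=(Rf_k)(y_k)$, so $\abso{R(h)(x_k)-R(h)(y_k)}\ge k\,\norm{x_k-y_k}^{1/k}$, contradicting local H\"older continuity of the smooth function $Rh$. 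You also omit the case $x_k=x_0$ for all $k$ (your choice $x_0\notin\overline{U}_k$ tacitly excludes it); here $\e(x_0)=0$ forces $j^k g_i^k(x_0)=j^k f(x_0)$, and the paper handles this by a separate but analogous H\"older-violation argument comparing $Rf_k$ at nearby points $y_k$ with $Rf$ at $x_0$.
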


\begin{proof}
Let $R$, $f$, $x_0$ and $K$ be as stated and suppose that the claim does not hold. Then we can find a sequence $x_k \to x_0$ and, for each $k \in \bN$, functions $f_k, g_k \in C^\infty(\Omega_1, \cD(\Omega_2))$ satisfying
\begin{align}
 & \supp f_k(y) \cup \supp g_k(y) \subseteq K \textrm{ for $y$ in a neighborhood of $x_k$}, \\
 & \sup_{\xi \in \Omega_2} \abso{ \pd_x^\alpha \pd_y^\beta ( f_k-f)(x_k)(\xi) } \le \e(x_k),\textrm{ and} \label{dienstag1} \\
 & \sup_{\xi \in \Omega_2} \abso{ \pd_x^\alpha \pd_y^\beta ( g_k-f)(x_k)(\xi) } \le \e(x_k) \label{dienstag2} 
\end{align}
for $0 \le \abso{\alpha} + \abso{\beta} \le k$ such that
\begin{equation}\label{cond7}
j^k f_k(x_k) = j^k g_k(x_k), \quad (Rf_k)(x_k) \ne (Rg_k)(x_k).
\end{equation}
Taking suitable subsequences we may assume that
\[ \norm{x_{k+1} - x_0} \le \norm{x_k - x_0}/2 \]
for all $k \in \bN$ and that all $x_k$ are contained in an open neighborhood $W$ of $x_0$ which is relatively compact in $\Omega_1$ and convex. Furthermore, we can assume that either $x_k \ne x_0$ or $x_k = x_0$ holds for all $k \in \bN$.

In the first case, choose points $y_k \in W$ with $x_0 \ne y_k \ne x_j$ for all $k,j \in \bN$ such that
\begin{align}
  \norm{y_k - x_k} & \le \frac{1}{k} \norm{x_k-x_0}, \label{cond4} \\
\sup_{\xi \in \Omega} \abso{ \pd_x^\alpha \pd_y^\beta (f_k-f)(y_k)(\xi) } & \le 2\e(x_k)\quad (0 \le \abso{\alpha} + \abso{\beta} \le k), \label{cond5} \\
\abso{(Rg_k)(x_k) - (R f_k)(y_k) } & \ge k \norm{x_k - y_k}^{1/k}, \label{cond6} \\
\supp f_k(y_k) & \subseteq K. \label{cond6b}
\end{align}
Such points $y_k$ can be chosen if each of these finitely many conditions holds for $y_k$ in some neighborhood of $x_k$. Conditions \eqref{cond4} and \eqref{cond6b} obviously are without problems. For condition \eqref{cond5} with fixed $\alpha$ and $\beta$ we note that $\pd_x^\alpha\pd_y^\beta (f_k-f)(W)$ is relatively compact (i.e., bounded) in $\cD(\Omega_2)$. Hence, there exists a compact set $B_k \subseteq \Omega_2$ such that $\supp \pd_x^\alpha \pd_y^\beta (f_k - f)(W) \subseteq B_k$. In particular, $\pd_x^\alpha \pd_y^\beta (f_k-f)$ is uniformly continuous in $\overline{W} \times B_k$ and requirement \eqref{cond5} is satisfied for $y_k$ in a small enough neighborhood of $x_k$. Finally, for \eqref{cond6} we first note that by \eqref{cond7} there is $\delta>0$ such that $\abso{(R f_k)(y_k) - (R g_k)(x_k ) } \ge \delta$ for $y_k$ in a small neighborhood of $x_k$ by continuity of $R f_k$. Moreover, we have
\[ k \norm{x_k - y_k}^{1/k} \le \delta \Longleftrightarrow \norm{x_k - y_k} \le (\delta/k)^k \]
which gives \eqref{cond6} for $y_k$ in a small enough neighborhood of $x_k$.

Next, we want to construct a function $h \in C^\infty(\Omega_1, \cD(\Omega_2))$ such that
\begin{equation}
(\pd_x^\alpha\pd_y^\beta h)(x)(\xi) = \left\{
\begin{aligned}
& (\pd_x^\alpha\pd_y^\beta g_k)(x)(\xi) & \qquad &x=x_k \textrm{ for some }k \in \bN,\\
& (\pd_x^\alpha\pd_y^\beta f_k)(x)(\xi) & \qquad &x=y_k \textrm{ for some }k \in \bN,\\
& (\pd_x^\alpha\pd_y^\beta f)(x)(\xi) & \qquad  &x = x_0
\end{aligned}
\right.\label{rhs}
\end{equation}
for all $\alpha,\beta \in \bN_0^n$ and $\xi \in \Omega_2$. For this purpose we apply Whitney's extension theorem to the family $h^{\alpha,\beta}(x)(\xi)$ defined by the right hand side of \eqref{rhs} for $(x, \xi)$ in the compact set $A \times L$ where $A \coleq \{x_0\} \cup \{ x_k : k \in \bN \} \cup \{ y_k : k \in \bN \}$ and the compact set $L \subseteq \Omega_2$ is chosen such that $K \subseteq L^\circ$. Again, we have to verify \eqref{verify}, which is straightforward using Taylor's formula in combination with  \eqref{dienstag1}, \eqref{dienstag2} and \eqref{cond5}.

Emplying a cut-off function as in the proof of Theorem \ref{thm_peetre}, we obtain $h \in C^\infty(\Omega_1, \cD(\Omega_2))$ as desired. Theorem \ref{thm_peetre} now implies
\[ \abso{ (Rh)(x_k) - (Rh)(y_k) } = \abso{ (R g_k) (x_k) - (R f_k) (y_k) } \ge k \norm{x_k-y_k}^{1/k}. \]
For large $k$ this gives a contradiction because $Rh$ is smooth and a fortiori locally Hölder continuous.

In the other case, i.e., $x_k = x_0$ for all $k$, our assumptions imply that
\begin{gather}
\pd_x^\alpha \pd_y^\beta f_k(x_0)(\xi) = \pd_x^\alpha \pd_y^\beta g_k(x_0)(\xi) = \pd_x^\alpha\pd_y^\beta f(x_0)(\xi) \label{blahber1} \\
(R f_k)(x_0) \ne (R g_k)(x_0) \label{blahber}
\end{gather}
for all $k \in \bN$, $\xi \in \Omega_2$ and $\abso{\alpha} + \abso{\beta} \le k$. 

Either $(Rf_k)(x_0)$ or $(Rg_k)(x_0)$ must be different from $(Rf)(x_0)$ for infinitely many values of $k$, hence without loss of generality we can assume that $(R f_k)(x_0) \ne (R f)(x_0)$. As in the previous case, we then choose a sequence $y_k \to x_0$ in an open convex neighborhood $W$ of $x_0$ which is relatively compact in $\Omega_1$ such that
\begin{align}
 \abso{ (Rf_k)(y_k) - (R f)(x_0) } & \ge k \norm { y_k - x_0 }^{1/k}, \\
 \norm{y_{k+1} - x_0} & < \norm{y_k - x_0}/2,\textrm{ and} \\
 \sup_{\xi \in \Omega_2} \abso{ \pd_x^\alpha \pd_y^\beta ( f_k - f)(y_k)(\xi)} & \le \frac{1}{k} \norm{y_k - x_0}^m \label{blahblu}
\end{align}
for all $\abso{\alpha} + \abso{\beta} + m \le k$. \eqref{blahblu} is obtained using Taylor's theorem as in the proof of Theorem \ref{thm_peetre}. Again using Whitney's extension theorem together with a cut-off function in $\cD(\Omega_2)$, we can construct a mapping $h \in C^\infty( \Omega_1, \cD(\Omega_2))$ satisfying
\[ j^\infty h (y_k) = j^\infty f_k(y_k), \quad j^\infty h(x_0) = j^\infty f(x_0). \]
To summarize, by Theorem \ref{thm_peetre} we obtain
\[ \abso{ (R h)(y_k) - (R h)(x_0) } = \abso{ (R f_k)(y_k) - (R f) (x_0) } \ge k \norm{y_k - x_0}^{1/k} \]
in contradiction to Hölder continuity of $Rh$, which concludes the proof.
\end{proof}

With this in place we are able to show the following (cf.~\cite[Theorem 3, p.~278]{zbMATH04036782}):

\begin{theorem}\label{thm_5}Let $R \colon C^\infty(\Omega_1, \cD(\Omega_2)) \to C^\infty(\Omega_1)$ be local and smooth and suppose we are given $f \in C^\infty(\Omega_1, \cD(\Omega_2))$, $x_0 \in \Omega_1$ and a compact subset $K \subseteq \Omega_2$ such that $\supp f(x_0) \subseteq K$. Then there are $r \in \bN$, a neighborhood $V$ of $x_0$ and $\kappa>0$ such that for all $x \in V$ and $g_1, g_2 \in C^\infty(\Omega_1, \cD(\Omega_2))$ with
\begin{enumerate}[label=(\roman*)]
 \item $\supp g_i(y) \subseteq K$ for $y$ in a neighborhood of $x$ and $i=1,2$,
 \item $\sup_{\xi \in \Omega_2} \abso{ \pd_x^\alpha \pd_y^\beta ( g_i - f)(x)(\xi)} \le \kappa$ for $i=1,2$ and $0 \le \abso{\alpha} + \abso{\beta} \le r$,
\end{enumerate}
the condition $j^r g_1(x) = j^r g_2(x)$ implies $(Rg_1)(x) = (Rg_2)(x)$.
\end{theorem}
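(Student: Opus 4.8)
The plan is to derive the theorem from Lemma~\ref{thelemma} together with the convenient calculus of the smooth map $R$: the one new ingredient compared with Lemma~\ref{thelemma} is smoothness, and its role is precisely to enlarge the \emph{flat} neighbourhood of $f$ controlled by Lemma~\ref{thelemma} to one of \emph{constant} radius. Write $E\coleq C^\infty(\Omega_1,\cD(\Omega_2))$ and enlarge $K$ so that $\supp f(y)\subseteq K$ for $y$ near $x_0$. Given $g_1,g_2$ as in the theorem with $j^rg_1(x)=j^rg_2(x)$, set $\vec\eta\coleq g_1-g_2$; then $j^r\vec\eta(x)=0$, i.e.\ $\pd_x^\alpha\pd_y^\beta\vec\eta(x)(\xi)=0$ for $\abso\alpha\le r$ and all $\beta,\xi$. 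Along the segment $g_s\coleq g_2+s\vec\eta$, $s\in[0,1]$ (which stays in the prescribed neighbourhood of $f$ and keeps the support condition), Taylor's formula for smooth maps between convenient vector spaces yields $(Rg_1)(x)-(Rg_2)(x)=\int_0^1 DR(g_s)(\vec\eta)(x)\,\ud s$, so it suffices to prove that $DR(\vec\varphi)(\vec\eta)(x)=0$ whenever $x\in V$, $j^r\vec\eta(x)=0$, the supports of $\vec\varphi,\vec\eta$ lie in $K$ near $x$, and $\sup_\xi\abso{\pd_x^\alpha\pd_y^\beta(\vec\varphi-f)(x)(\xi)}\le\kappa$ for $\abso\alpha+\abso\beta\le r$. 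Here $dR\colon E\times E\to C^\infty(\Omega_1)$ and all its iterates are smooth ($R$ being smooth), and they are local: applying locality of $R$ to $(s_1,\dots,s_j)\mapsto\vec\varphi+\sum_i s_i\vec\zeta_i$ and differentiating at the origin shows that $D^jR(\vec\varphi)(\vec\zeta_1,\dots,\vec\zeta_j)(x)$ depends only on the germs of its arguments at $x$. (Theorem~\ref{thm_peetre} and Lemma~\ref{thelemma} hold verbatim for $\cD(\Omega_2,\bR^k)$-valued mappings, hence apply to the $d^jR$.)

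Next I would settle the \emph{flat} version. Let $r_0$ be the order Lemma~\ref{thelemma} produces for $R$ (reference $f$, point $x_0$, compact $K$) and take $r\ge r_0$ (and at least the finitely many orders needed below for the $d^jR$). If a base point $\vec\psi$ satisfies $\sup_\xi\abso{\pd_x^\alpha\pd_y^\beta(\vec\psi-f)(x)(\xi)}\le\e(x)/2$ for $\abso\alpha+\abso\beta\le r$, has supports in $K$, and $t_i,t$ are small, then $\vec\psi+\sum_i t_i\vec\zeta_i+t\vec\eta$ is $\e(x)$-close to $f$ in the sense of Lemma~\ref{thelemma}; that lemma then forces its $R$-value at $x$ to depend only on $j^{r}\vec\psi(x)+\sum_i t_i\,j^{r}\vec\zeta_i(x)+t\,j^{r}\vec\eta(x)$, which is independent of $t$ once $j^r\vec\eta(x)=0$. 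Differentiating once in $t$ and once in each $t_i$ at the origin gives $D^{m+1}R(\vec\psi)(\vec\zeta_1,\dots,\vec\zeta_m,\vec\eta)(x)=0$ for every $m\ge0$. Expanding the smooth map $\vec\psi\mapsto DR(\vec\psi)(\vec\eta)(x)$ by Taylor around $f$ to order $N$, all polynomial terms vanish by this, and --- for $\vec\varphi$ within $\e(x)/4$ of $f$ --- so does the integral remainder, whose base point $f+t(\vec\varphi-f)$ is then within $\e(x)/2$ of $f$. Thus the theorem already holds with $\kappa$ replaced by the flat weight $\e(x)/4$ (which is useful near, but not at, $x_0$).

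The main obstacle is the passage from this flat weight to a genuine constant $\kappa$. For $\vec\varphi$ only $\kappa$-close to $f$, the base point of the Taylor remainder need no longer lie in the region where Lemma~\ref{thelemma} controls $R$, and --- $R$ being smooth but not analytic --- the remainder need not vanish as $N\to\infty$; so at this stage smoothness must be exploited quantitatively, and it is here that $\kappa$ gets fixed. The idea is to combine two facts: that each $d^jR$ is bounded on bounded subsets of its domain (a basic feature of smooth maps between convenient vector spaces), and that $\e$ is flat at $x_0$, so its exponential decay beats any power of $\norm{x-x_0}$. Shrinking $\kappa$ and $V$ suitably, a telescoping and rescaling estimate modelled on the proof of Lemma~\ref{thelemma} --- now carried out for the derivatives $d^jR$ --- closes uniformly for $x\in V$ and $\vec\varphi$ in the $\kappa$-neighbourhood of $f$, the flatness of $\e$ absorbing the otherwise uncontrolled higher derivatives of $\vec\varphi$. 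Making this estimate precise is the heart of the proof; the remaining steps are routine manipulations with Taylor's formula, multilinearity of the $d^jR$, and the support conditions.
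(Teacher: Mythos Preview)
Your proposal has a genuine gap. Everything up to and including your ``flat version'' is essentially a repackaging of Lemma~\ref{thelemma} through the derivative $DR$; the entire content of Theorem~\ref{thm_5} beyond that lemma is precisely the passage from the flat weight $\e(x)$ to a constant $\kappa$, and this is the step you do not carry out. You write that ``making this estimate precise is the heart of the proof'' and then stop. The ingredients you name do not obviously close the gap: boundedness of $d^jR$ on bounded sets yields constants $M_j$ with no control on their growth in $j$, so a Taylor remainder of order $N$ contributes a term of size roughly $M_{N+1}\kappa^{N+1}/(N+1)!$ which need not tend to zero; and the flatness of $\e$ is a statement about neighbourhoods of $x_0$ that no longer constrains $\vec\varphi$ once you have replaced $\e(x)$ by $\kappa$. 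Re-running Lemma~\ref{thelemma} for each $d^jR$ separately gives an order $r_j$ and a neighbourhood $V_j$ depending on $j$, again with no uniformity. I do not see how to make your sketch converge to a proof.

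The paper's argument is entirely different and avoids any analysis of $d^jR$. Assuming the conclusion fails, one gets sequences $x_k\to x_0$ and $f_k,g_k$ with $j^kf_k(x_k)=j^kg_k(x_k)$ but $(Rf_k)(x_k)\ne(Rg_k)(x_k)$. One then \emph{adds a real parameter}: by Whitney's extension theorem one builds a single $s\in C^\infty(\bR\times\Omega_1,\cD(\Omega_2))$ whose jets at $(2^{-k},x_k)$ match those of $f_k$ and whose jet at $(0,x_0)$ matches that of $f$. Smoothness of $R$ is used in exactly one place: to ensure that
\[
(\widetilde Rh)(t,x)\coleq R(h(t,\cdot))(x)
\]
defines a map $\widetilde R\colon C^\infty(\bR\times\Omega_1,\cD(\Omega_2))\to C^\infty(\bR\times\Omega_1)$, i.e.\ that the result is smooth in $t$. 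This $\widetilde R$ is local, so Lemma~\ref{thelemma} applies to it at the reference point $(0,x_0)$ with reference function $s$. Since the jets of $s$ and of $\tilde g_k(t,x,\xi)\coleq g_k(x,\xi)$ agree to order $k$ at $(2^{-k},x_k)$ and both lie in the required $\e$-neighbourhood of $s$ there (the point $(2^{-k},x_k)$ is at distance $\ge 2^{-k}$ from $(0,x_0)$ regardless of where $x_k$ sits), Lemma~\ref{thelemma} forces $(\widetilde Rs)(2^{-k},x_k)=(\widetilde R\tilde g_k)(2^{-k},x_k)$ for large $k$. Unwinding, $(Rf_k)(x_k)=(Rg_k)(x_k)$, a contradiction. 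The extra parameter is what converts the flat bound of Lemma~\ref{thelemma} into a uniform one, with no derivative estimates on $R$ needed at all.
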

\begin{proof}
 Fix $R$, $f$, $x_0$ and $K$ as stated and assume the claim does not hold. With $r(k) \coleq 2^{-k}$ there exists a sequence $x_k \to x_0$ and $f_k, g_k \in C^\infty(\Omega_1, \cD(\Omega_2))$ with
\begin{align*}
& \supp f_k(y) \cup \supp g_k(y) \subseteq K \textrm{ for $y$ in a neighborhood of $x_k$}, \\
& \sup_{\xi \in \Omega_2} \abso{ \pd_x^\alpha \pd_y^\beta ( f_k - f)(x_k)(\xi)} \le e^{-r(k)},\textrm{ and} \\
& \sup_{\xi \in \Omega_2} \abso{\pd_x^\alpha \pd_y^\beta ( g_k - f)(x_k)(\xi)} \le e^{-r(k)} 
\end{align*}
for $0 \le \abso{\alpha} + \abso{\beta} \le k$ such that
\begin{equation}\label{cond8}
j^k g_k(x_k) = j^k f_k(x_k),\quad (R g_k)(x_k) \ne (R f_k)(x_k).
\end{equation}
We may assume that $\norm{x_{k+1} - x_0} \le \norm{ x_k - x_0 }/2$. We then construct $s \in C^\infty(\bR \times \Omega_1, \cD(\Omega_2))$ such that
\begin{align*}
 (\pd_x^\alpha \pd_y^\beta s)(2^{-k}, x_k, \xi ) &= (\pd_x^\alpha \pd_y^\beta f_k)(x_k)(\xi), \\
 (\pd_x^\alpha \pd_y^\beta s)(0, x_0, \xi) &= (\pd_x^\alpha \pd_y^\beta f)(x_0)(\xi)
\end{align*}
for all $k \in \bN$, $\xi \in \Omega_2$ and $\alpha,\beta \in \bN_0^n$. Note that $A \coleq \{ ( 0, x_0 ) \} \cup \{ ( 2^{-k}, x_k) : k \in \bN \}$ is compact in $\bR \times \bR^n$. The function $s$ is obtained by applying Whitney's theorem to the family of functions $s^{l,\alpha,\beta}$ (with $l \in \bN_0$ and $\alpha,\beta \in \bN_0^n$) defined on $A \times L$, where $L \subseteq \Omega_2$ is any compact set such that $K \subseteq L^\circ$, by
\[
s^{l, \alpha, \beta}(t,x,\xi) \coleq \left\{ 
\begin{aligned}
& (\pd_x^\alpha \pd_y^\beta f_k)(x_k)(\xi) & &l=0 \textrm{ and } (t,x) = (2^{-k}, x_k)\textrm{ for some }k, \\
& (\pd_x^\alpha \pd_y^\beta f)(x_0)(\xi) & &l=0 \textrm{ and } (t,x) = (0, x_0), \\
&0 \quad& &l \ne 0.
\end{aligned}
\right.
\]
The requirements for Whitney's theorem then are easily verified and we obtain $\tilde s \in C^\infty(\bR \times \bR^n \times \bR^{n'})$ which, by multiplying it with a suitable cut-off function in $\cD(\Omega_2)$ as before, gives $s$ as desired. Next, we define a map
\begin{align*}
 \widetilde R \colon C^\infty( \bR \times \Omega_1, \cD(\Omega_2)) &\to C^\infty(\bR \times \Omega_1) \\
(\widetilde R h)(t,x) &\coleq R(h_t)(x)
\end{align*}
where $h_t \in C^\infty(\Omega_1, \cD(\Omega_2))$ is given by $h_t(x) \coleq h(t,x)$. Obviously, $\widetilde R$ is local in the sense of Definition \ref{def_local}. Now $(R f_k)(x_k) = R(s(2^{-k}, .))(x_k)$ holds because $\pd_x^\alpha f_k(x_k, \xi) = \pd_x^\alpha s(2^{-k}, x_k, \xi)$ for all $\alpha,\beta \in \bN_0^n$ and $\xi \in \Omega_2$ by the construction of $s$ above. Furthermore, $R(s(2^{-k},.))(x_k) = (\widetilde R s)(2^{-k}, x_k)$ by the definition of $\widetilde R$. We now define $\tilde g_k \in C^\infty(\bR \times \Omega_1, \cD(\Omega_2))$ by $\tilde g_k(t,x,\xi) = g_k ( x, \xi)$ and see that
\[ (\pd_t^l \pd_x^\alpha s)(2^{-k}, x_k, \xi) = (\pd_t^l \pd_x^\alpha \tilde g_k)(2^{-k}, x_k, \xi) \]
for $0 \le l + \abso{\alpha} \le k$. Hence, $(\widetilde R s)(2^{-k}, x_k) = (\widetilde R \tilde g_k)(2^{-k}, x_k)$ holds for large values of $k$ by Lemma \ref{thelemma}. Finally, $(\widetilde R \tilde g_k)(2^{-k}, x_k) = (Rg_k)(x_k)$. To summarize, we obtain $(Rf_k)(x_k) = (Rg_k)(x_k)$ for large $k$, which contradicts \eqref{cond8} and concludes the proof.
\end{proof}

\section{Conclusion}

We have seen in Theorem \ref{thm_peetre} that $R(\vec\varphi)(x_0)$ does not depend on the entire germ of $\vec\varphi$ at $x_0$, but only on its $\infty$-jet. Moreover, the statement of Theorem \ref{thm_5} may be reworded as follows: if $R$ is smooth and local and we are given $\vec\varphi$ and $x_0$, there is a neighborhood of $(\vec\varphi,x_0)$ and a natural number $r$ such that that for all $(\vec\psi, x)$ in this neighborhood, the value of $R(\vec\psi)(x)$ depends only on the $r$-jet of $\vec\psi$ at $x$.

\subsection*{Acknowledgments} This work was supported by the Austrian Science Fund (FWF) project P23714.

\end{document}